\def\cleardoublepage{\clearpage\if@twoside \ifodd\c@page\else%
    \hbox{}%
    \thispagestyle{empty}%
    \newpage%
    \if@twocolumn\hbox{}\newpage\fi\fi\fi}
\newtheorem{theorem}{Theorem}
\newtheorem{definition}{Definition}
\begin{document}
\title{
{\begin{flushleft}
\vskip 0.45in
{\normalsize\bfseries\textit{Chapter~1}}
\end{flushleft}
\vskip 0.45in
\bfseries\scshape Biunits of ternary algebra of hypermatrices}}
\author{\bfseries\itshape Viktor Abramov\thanks{E-mail address: viktor.abramov@ut.ee}\\
Institute of Mathematics and Statistics, University of Tartu, Estonia}
\date{}
\maketitle
\thispagestyle{empty}
\setcounter{page}{141}
% ------- [First Page Running Head] - place it immediately after title! ------
\thispagestyle{fancy}
\fancyhead{}
\fancyhead[L]{In: Energy, Mass, Spin in Relativity, Gravitation, Cosmology \\
Editor: Editor Name, pp. {\thepage-\pageref{lastpage-01}}} % needs \label{lastpage-01} on the last page.
\fancyhead[R]{ISBN 0000000000  \\
\copyright~2006 Nova Science Publishers, Inc.}
\fancyfoot{}
\renewcommand{\headrulewidth}{0pt}
%------------------------------------------------------------------------------

\begin{abstract}
We study a ternary algebra of third-order hypermatrices, where by hypermatrix we mean a complex-valued quantity with three indices $T_{ijk}$. Ternary multiplications of hypermatrices have the property of generalized associativity. We introduce the concepts of $q$-cyclic and $\bar q$-cyclic traceless hypermatrix, where $q$ is a primitive third-order root of unity and study the structures of the corresponding subspaces. We show that $q$-cyclic traceless hypermatrices can be used to construct right biunits of the ternary algebra of third-order hypermatrices. We show the connection between the Clifford algebra structure induced by the quadratic invariant of $q$-cyclic traceless hypermatrices and ternary multiplication of hypermatrices. The motivation for studying the space of $q$-cyclic traceless hypermatrices is the ternary generalization of the Pauli exclusion principle.
\end{abstract}

% Note: When having missing abstracts, please let the 2in hole:
%\vspace{2in}

%\noindent \textbf{PACS} 05.45-a, 52.35.Mw, 96.50.Fm.
%\vspace{.08in} \noindent \textbf{Keywords:} Energy of vacuum state.

%% Other situations:
\noindent \textbf{Key Words}: quark model, Pauli exclusion principle, semi-heap, ternary algebra, hypermatrices, irreducible representation
\vspace{.08in} \noindent {\textbf AMS Subject Classification: 17A40, 20N10, 53C07} .

% ------------ [Running Heads - for odd and even pages] - please insert it only on page 2!
\pagestyle{fancy}
\fancyhead{}
\fancyhead[EC]{Viktor Abramov}
\fancyhead[EL,OR]{\thepage}
\fancyhead[OC]{Ternary Algebra of Hypermatrices}
\fancyfoot{}
\renewcommand\headrulewidth{0.5pt}
\label{lastpage-01}
%%%%%%%%%%%%%%%%%%%%%%%%%
%%%%%%%%%%%%%%%%%%%%%%%%%
\section{Introduction}
In this chapter, the main object of study is a ternary algebra of hypermatrices. By hypermatrix we mean a quantity with three indices $T_{ijk}$, where every index is an integer from $1$ to $n$. It should be noted that in this case there is no firmly established terminology, despite the fact that the use of hypermatrices in various fields such as algebra, geometry, theoretical physics and cybernetics is becoming increasingly popular. Quantities with three indices are also called 3-dimensional matrices, spatial matrices or three-index matrices. If we assume the tensor nature of the transformation of such a quantity with three indices, then we get a concept that also has several names, such as trivalent tensor, tensor of rank three or trilinear tensor. In this chapter we will use the terminology proposed in \cite{Abramov:Zapata_Arsiwalla_Beynon_2024}, where the authors advocate the need to establish a unified terminology in this area. Thus, we will call a quantity with three indices $T_{ijk}$ a hypermatrix. A tensor nature of such a quantity is also important for us. A tensor nature will be taken into account by assuming a naturally defined action of the rotation group $\mbox{SO}(3)$ on the vector space of hypermatrices.

A vector space of hypermatrices is of interest to abstract algebra primarily because it is a natural, so to speak, building material for ternary multiplication laws. It should be noted that in theoretical physics we can regularly see interest towards algebras with a ternary multiplication law. Here we note only few approaches. In 1980s there was proposed (\cite{Abramov:Bars_Gunaydin_1980} and references therein) an approach to fundamental constituents of matter based on ternary algebras which are building blocks of Lie algebras and superalgebras. In the mid-70s of the last century, Nambu proposed a generalization of Hamiltonian mechanics \cite{Abramov:Nambu_1973}, based on a ternary analogue of the Poisson bracket, called the Nambu-Poisson bracket. This approach was later developed in \cite{Abramov:Takhtajan_1994}. In the mid-1980s, Filippov proposed a generalization of Lie algebra based on an $n$-ary Lie bracket and the Filippov-Jacobi identity. Later, a surge of interest in the above generalizations was due to their applications in M-brane theory \cite{Abramov:Bagger_Lambert_2007}, \cite{Abramov:Baggert_Lamber_2008}, \cite{Abramov:Cherkis_Samann_2008}.

The physical motivation for the study of ternary algebras of hypermatrices proposed in this chapter is approach to a generalization of the Dirac operator and a ternary generalization of the Pauli exclusion principle proposed by R. Kerner \cite{Abramov:Abramov_Kerner_LeRoy_J_Math_Phys_1997}, \cite{Abramov:Kerner_1991}, \cite{Abramov:Kerner_2017}. We will briefly explain the ideas that underlay a ternary generalization of the Pauli exclusion principle. First of all, we note that the motivation for a ternary generalization of the Pauli exclusion principle is the properties of the quark model. Recall that quarks are the most fundamental particles of matter currently known. This means that all hadrons are composite particles made of quarks. Moreover, a fermionic baryon is a combination of three quarks, and a meson with integer spin is a pair quark-antiquark. Quarks have several quantum characteristics. One of the quantum characteristics is the flavor of a quark, and in each generation of quarks, and there are three of them, there are two flavors $(u,d), (c,s), (t,b)$. In addition to flavor, each quark has a color, electric charge, isospin, baryon number and other quantum characteristics. Note that all allowed combinations of quarks are colorless. In quantum chromodynamics, quarks are treated as fermions with half-integer spin and in this respect they are subject to the Pauli exclusion principle. In analogy with proton and neutron considered as two isospin components of a nucleon doublet the $u$-quark and $d$-quark (first generation) can be considered as two states of more general object. According to quantum chromodynamics, in a stable bound state there is place for two quarks in the same $u$-state or $d$-state, but not for three. This suggests a possible generalization of the Pauli exclusion principle, which can be formulated as follows: no three quarks with identically equal quantum characteristics can form a stable configuration that is perceived as a strongly interacting particle.

It is well known that the Pauli exclusion principle leads to the skew-symmetry of a wave function of a quantum fermion system. Let us find algebraic properties of a wave function of a system of particles, for example quarks, which obeys the ternary generalization of Pauli exclusion principle. Consider a quantum system of three particles. Let $|1>,|2>,\ldots,|n>$ be quantum states of these particles and $\Psi$ be a wave function of this system. The value of a wave function on three basic states $|i>,|j>,|k>$ will be denoted by $\Psi_{ijk}$. Then according to the ternary generalization of the Pauli exclusion principle for any basic state $|i>$ we must have $\Psi_{iii}=0$. If we now consider the superposition $|\varrho>$ of three basic states $\xi|i>+\zeta|j>+\eta|k>$, where $\xi,\zeta,\eta$ are complex numbers, then, according to the ternary generalization of Pauli exclusion principle, a wave function must vanish on this superposition, that is, $\Psi_{\varrho\varrho\varrho}=0$. Making use of the linearity of a wave function we conclude that the equation $\Psi_{\varrho\varrho\varrho}=0$ holds if and only if for any three basic states $|i>,|j>,|k>$ a wave function $\Psi$ satisfies
\begin{equation}
\Psi_{ijk}+\Psi_{jki}+\Psi_{kij}+\Psi_{kji}+\Psi_{jik}+\Psi_{ikj}=0,
\label{Abramov:introduction_wave_function}
\end{equation}
that is, the sum of the values of a wave function on all permutations of any three basic states is equal to zero.

Equation (\ref{Abramov:introduction_wave_function}) has two solutions:
\begin{itemize}
\item A wave function $\Psi_{ijk}$ is totally skew-symmetric, that is, permuting any two basic states in $\Psi_{ijk}$ we get minus. For example, in the case of first two basic states we have $\Psi_{ijk}=-\Psi_{jik}$. It is clear that this solution is consistent with the Pauli exclusion principle, that is, Fermi-Dirac statistics. Note that in this case a wave function vanishes whenever there are two equal states among $|i>,|j>,|k>$.
    Hence a wave function $\Psi_{ijk}$ is traceless over any pair of basic states, that is,
    \begin{equation}\Psi_{iik}=\Psi_{iki}=\Psi_{kii}=0.\label{Abramov:introduction_traceless}\end{equation}
    Here and in what follows we use Einstein's convention of summation over repeated indices.
    Note that in the present case the sum of values of a wave function on cyclic permutations of basic states, that is $\Psi_{ijk}+\Psi_{jki}+\Psi_{kij}$, may be different from zero.
\item A wave function $\Psi_{ijk}$ has the property
    \begin{equation}\Psi_{ijk}+\Psi_{jki}+\Psi_{kij}=0.\label{Abramov:introduction_cyclic_equation}\end{equation}
    In this case, a wave function can be nonzero even if we have two identical basic states (with equal quantum characteristics) among $|i>,|j>,|k>$. Thus, this case differs from Fermi-Dirac statistics because a quantum system allows two (but not three!) identical states. Note that, in turn, equation (\ref{Abramov:introduction_cyclic_equation}) can be also solved if we assume the following properties of the wave function
    $$
    \Psi_{ijk}=q\,\Psi_{jki},\;\;\mbox{or}\;\;\;\Psi_{ijk}=\bar q\,\Psi_{jki},
    $$
    where $q=\exp(2\pi\,i/3)$ is the third-order root of unity. Generally it does not follow from the property (\ref{Abramov:introduction_cyclic_equation}) that a wave function is traceless over any pair of indices. Therefore, in this part we have a difference from the first solution, that is, the skew-symmetric case. To make the analogy with the skew-symmetric case stronger, we will require that, in addition to property (\ref{Abramov:introduction_cyclic_equation}), a wave function be traceless over any pair of indices, i.e. (\ref{Abramov:introduction_traceless}).
\end{itemize}
In this chapter, the main object of study is the space of third-order hypermatrices $\mathfrak T^3$. A group of rotations $\mbox{SO}(3)$ acts on this space and under this action the hypermatrices transform like third-order covariant tensors. The vector space $\mathfrak T^3$ can be equipped with a ternary multiplication of hypermatrices, which has the property of generalized associativity. In this regard, recall that a set $H$ is called a semi-heap if it is equipped with a ternary multiplication
$$
(a,b,c)\in H\times H\times H\to a\cdot b\cdot c\in H,
$$
which has the property of generalized associativity
\begin{equation}
(a\cdot b\cdot c)\cdot f\cdot g=a\cdot (f\cdot c\cdot b)\cdot g=a\cdot b\cdot (c\cdot f\cdot g).
\label{Abramov:introduction_generalized_associativity}
\end{equation}
A semi-heap $H$ is said to be a ternary algebra if $H$ is a vector space. It should be noted that when the bracket in (\ref{Abramov:introduction_generalized_associativity}) is moved from the leftmost position to the center, elements $b$ and $f$ are rearranged. An excellent overview of the theory of semiheaps is given in \cite{Abramov:Zapata_Arsiwalla_Beynon_2024} (references therein), where in particular it is noted that the concept of a semiheap was introduced by V.V. Wagner in connection with an algebraic approach to the set of transition functions associated to an atlas of a manifold. In this chapter we use the terminology proposed in \cite{Abramov:Zapata_Arsiwalla_Beynon_2024}. We will need a notion which generalizes a concept of neutral element to ternary multiplications. An element $e$ of a semiheap $H$ is said to be a right (left) biunit if for any $a\in H$ it is satisfies $a\cdot e\cdot e=a$ ($e\cdot e\cdot a=a$). If $e\in H$ is a right biunit as well as left biunit then it is referred to as a biunit of a semiheap $H$. A heap is a semiheap $H$ whose any element $a$ is a biunit, that is, for any $b\in H$ it holds $a\cdot a\cdot b=b\cdot a\cdot a=b$.

In order to define a ternary algebra structure on the vector space of third-order hypermatrices we will use the following theorem
\cite{Abramov:Abramov_Kerner_Liivapuu_Shitov_2009}
\begin{theorem}
Let $A,B,C$ be $N$th order complex hypermatrices. Then there are only four different triple products of $N$th order complex  hypermatrices which obey the generalized associativity (\ref{Abramov:introduction_generalized_associativity}). These are
\begin{enumerate}
\item[1)]
$(A\odot B\odot C)_{ijk} = A_{ilm}B_{nlm}C_{njk},\quad
      A\odot B\odot C\rightarrow
\xy <1cm,0cm>:
(1,0)*+{A} , (2,0)*+{B} , (3,0)*+{C} ,
(1.25,-0.2)*+{\bullet} , (1.45,-0.2)*+{\circ} , (1.65,-0.2)*+{\circ} ,
(2.25,-0.2)*+{\circ} , (2.45,-0.2)*+{\circ} , (2.65,-0.2)*+{\circ} ,
(3.25,-0.2)*+{\circ} , (3.45,-0.2)*+{\bullet} , (3.65,-0.2)*+{\bullet} ,
(1.45,-0.26);(2.45,-0.26)**\crv{(1.5, -0.5)&(1.95, -0.7)&(2.4, -0.5)} ,
(1.65,-0.26);(2.65,-0.26)**\crv{(1.7, -0.4)&(2.15, -0.6)&(2.6, -0.4)} ,
(2.25,-0.26);(3.25,-0.26)**\crv{(2.3, -0.5)&(2.75, -0.7)&(3.2, -0.5)}
\endxy$
\item[2)]
$(A\odot B\odot C)_{ijk} = A_{ilm}B_{nml}C_{njk},\quad
     A\odot B\odot C \rightarrow
\xy <1cm,0cm>:
(1,0)*+{A} , (2,0)*+{B} , (3,0)*+{C} ,
(1.25,-0.2)*+{\bullet} , (1.45,-0.2)*+{\circ} , (1.65,-0.2)*+{\circ} ,
(2.25,-0.2)*+{\circ} , (2.45,-0.2)*+{\circ} , (2.65,-0.2)*+{\circ} ,
(3.25,-0.2)*+{\circ} , (3.45,-0.2)*+{\bullet} , (3.65,-0.2)*+{\bullet} ,
(1.45,-0.26);(2.65,-0.26)**\crv{(1.5, -0.5)&(2.05, -0.7)&(2.6, -0.5)} ,
(1.65,-0.26);(2.45,-0.26)**\crv{(1.7, -0.4)&(2.05, -0.6)&(2.4, -0.4)} ,
(2.25,-0.26);(3.25,-0.26)**\crv{(2.3, -0.5)&(2.75, -0.7)&(3.2, -0.5)}
\endxy$
\item[3)]
$(A\odot B\odot C)_{ijk} = A_{ijl}B_{nml}C_{mnk},\quad
    A\odot B\odot C \rightarrow
\xy <1cm,0cm>:
(1,0)*+{A} , (2,0)*+{B} , (3,0)*+{C} ,
(1.25,-0.2)*+{\bullet} , (1.45,-0.2)*+{\bullet} , (1.65,-0.2)*+{\circ} ,
(2.25,-0.2)*+{\circ} , (2.45,-0.2)*+{\circ} , (2.65,-0.2)*+{\circ} ,
(3.25,-0.2)*+{\circ} , (3.45,-0.2)*+{\circ} , (3.65,-0.2)*+{\bullet} ,
(1.65,-0.26);(2.65,-0.26)**\crv{(1.7, -0.5)&(2.15, -0.7)&(2.6, -0.5)} ,
(2.25,-0.26);(3.45,-0.26)**\crv{(2.3, -0.5)&(2.85, -0.7)&(3.4, -0.5)} ,
(2.45,-0.26);(3.25,-0.26)**\crv{(2.45, -0.4)&(2.85, -0.6)&(3.2, -0.4)} ,
\endxy$
\item[4)]
$(A\odot B\odot C)_{ijk} = A_{ijl}B_{mnl}C_{mnk},\quad
      A\odot B\odot C\rightarrow
\xy <1cm,0cm>:
(1,0)*+{A} , (2,0)*+{B} , (3,0)*+{C} ,
(1.25,-0.2)*+{\bullet} , (1.45,-0.2)*+{\bullet} , (1.65,-0.2)*+{\circ} ,
(2.25,-0.2)*+{\circ} , (2.45,-0.2)*+{\circ} , (2.65,-0.2)*+{\circ} ,
(3.25,-0.2)*+{\circ} , (3.45,-0.2)*+{\circ} , (3.65,-0.2)*+{\bullet} ,
(1.65,-0.26);(2.65,-0.26)**\crv{(1.7, -0.5)&(2.15, -0.7)&(2.6, -0.5)} ,
(2.25,-0.26);(3.25,-0.26)**\crv{(2.3, -0.5)&(2.75, -0.7)&(3.2, -0.5)} ,
(2.45,-0.26);(3.45,-0.26)**\crv{(2.45, -0.4)&(2.95, -0.6)&(3.4, -0.4)} ,
\endxy$
\label{Abramov:introduction_theorem_ternary_multiplications}
\end{enumerate}
\end{theorem}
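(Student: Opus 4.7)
The plan is to view each candidate triple product as a combinatorial datum: a choice of which three among the nine index slots of $A_{\alpha_1\alpha_2\alpha_3}$, $B_{\beta_1\beta_2\beta_3}$, $C_{\gamma_1\gamma_2\gamma_3}$ carry the free output indices $i,j,k$, together with a perfect matching of the remaining six slots into three Einstein-summed contraction pairs. A priori the number of such diagrams is large, and the strategy is to use each of the two equalities in (\ref{Abramov:introduction_generalized_associativity}) to prune it down to exactly four.

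First I would locate the outer free indices. Applying the right-shift $(A\odot B\odot C)\odot F\odot G = A\odot B\odot (C\odot F\odot G)$ to generic $A,B,C,F,G$ and comparing which factor and which slot contributes each outer free index on the two sides forces the first free index $i$ to originate from the first slot of $A$ and the third free index $k$ from the last slot of $C$. Next, using the middle-bracket identity $(A\odot B\odot C)\odot F\odot G = A\odot(F\odot C\odot B)\odot G$, I would argue that $B$ cannot supply any free index of the outer product $ABC$: on the right-hand side $B$ is buried inside the inner triple with $B$ and $F$ interchanged, so any putative free index contributed by a slot of $B$ on the left would, on the right, have to come from the analogous slot of $F$, which is excluded by choosing $F$ generic. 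All three indices of $B$ are therefore summed.

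The middle free index $j$ of $ABC$ must then come either from the second slot of $A$ or from the middle slot of $C$. Each of these choices leaves two of $B$'s three contractions pointing to one outer factor and the remaining one to the other, and the two parallel slots of $B$ can be paired either in the same order or in the swapped order, yielding $2\times 2=4$ diagrams in total. These are precisely the four products displayed in cases 1)--4). A direct index manipulation then confirms that each one indeed satisfies (\ref{Abramov:introduction_generalized_associativity}); this verification is essentially a relabelling of dummy summation indices and is routine.

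The main obstacle is the middle-bracket step, where the $B\leftrightarrow F$ swap makes the book-keeping delicate: one must check carefully that no exotic contraction pattern with a free index in $B$ accidentally survives because of a partial symmetry between $B$ and $F$, and that the order of summed indices inside $B$ really must differ only by the two possibilities listed. Once this combinatorial rigidity has been secured, the remainder of the argument reduces to a clean verification case by case.
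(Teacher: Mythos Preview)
The paper does not actually prove this theorem: it is quoted verbatim from the cited reference \cite{Abramov:Abramov_Kerner_Liivapuu_Shitov_2009} and used as input for the rest of the chapter. There is therefore no in-paper argument to compare your proposal against.

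Regarding your sketch on its own merits: the combinatorial framing (free-index placement plus a perfect matching on the remaining six slots, then pruning via the two associativity equalities) is a reasonable plan and is in the spirit of how such classifications are typically carried out. Two places need more than you have written. First, after you pin $i$ to the first slot of $A$ and $k$ to the last slot of $C$ and exclude $B$ from carrying a free index, you jump to ``$j$ comes from the second slot of $A$ or the middle slot of $C$''; but a priori $j$ could also sit in the third slot of $A$ or the first slot of $C$, and you have not said which identity kills those two options. Second, the claim that the six summed slots always split as ``two of $B$'s contractions to one outer factor and one to the other'' tacitly excludes (i) a direct $A$--$C$ contraction bypassing $B$, and (ii) a $B$--$B$ self-contraction; both must be eliminated by the middle-bracket identity, and this is exactly the delicate $B\leftrightarrow F$ bookkeeping you yourself flag as the main obstacle. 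Until those cases are explicitly disposed of, the $2\times 2=4$ count is not yet justified.
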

This theorem uses diagrams located at the right end of the lines 1), 2), 3), 4) to schematically describe associative ternary multiplications of hypermatrices. In these diagrams, black circles denote free indices (no summation), and summation (or contraction) shown by an arc connecting two white circles is performed over the indices corresponding to white circles. From this theorem it follows that the vector space of $N$th order hypermatrices endowed with one of the ternary multiplications 1 - 4 is a ternary algebra. Particularly $\mathfrak T^3$ is a ternary algebra. According to the terminology used in \cite{Abramov:Zapata_Arsiwalla_Beynon_2024}, the ternary multiplication of 4 is called a fish product. Indeed, ternary multiplication 4 can be schematically represented by the following diagram, which superficially resembles the silhouette of a fish (vertices marked in red indicate summation over the corresponding indices).
\begin{figure}[htp]
\centering
  % Requires \usepackage{graphicx}
  \includegraphics[width=14cm]{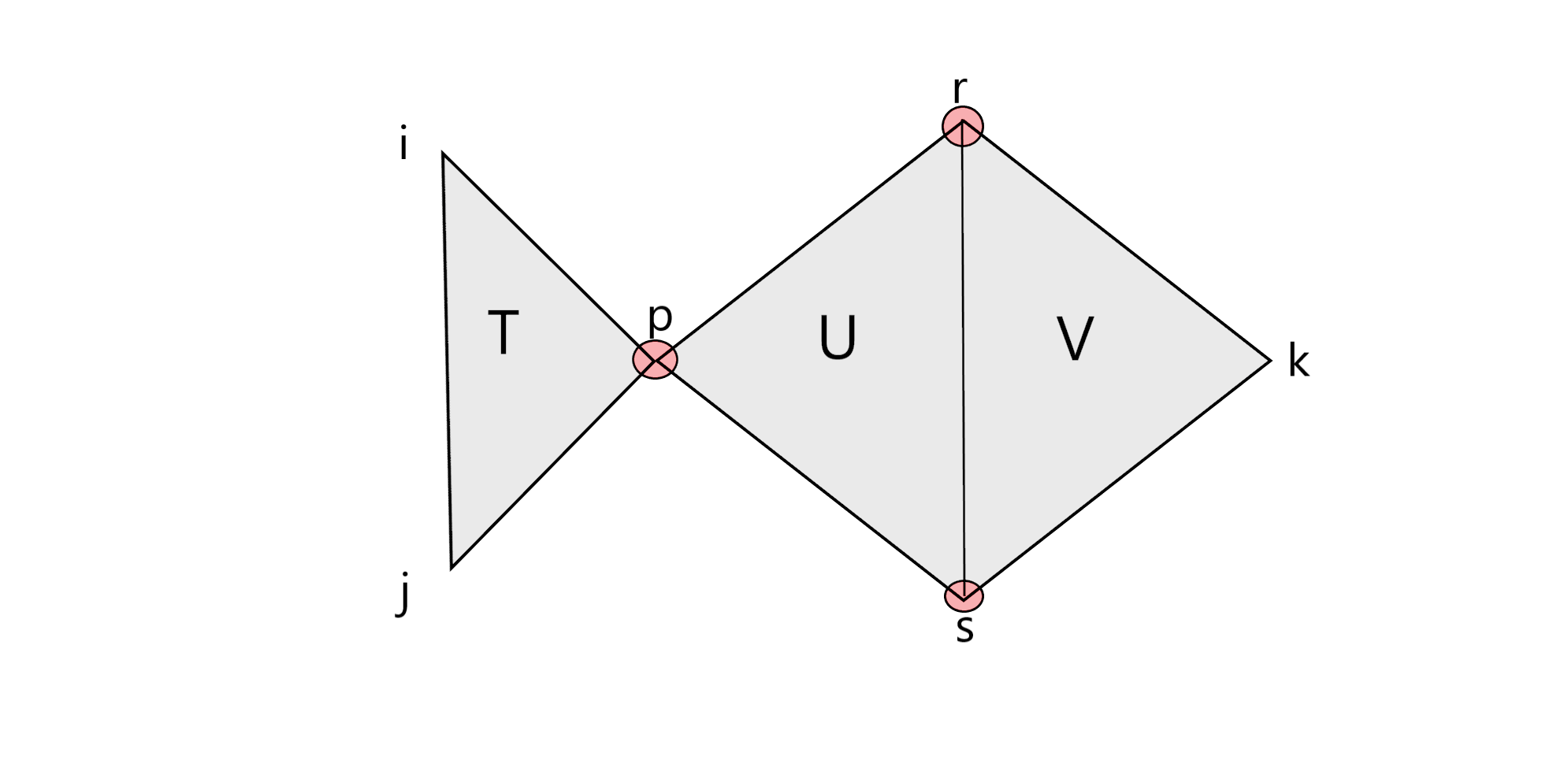}\\
  \caption{Ternary multiplication $(T\bullet U\bullet V)_{ijk} = T_{ijp}U_{rsp}V_{rsk}$}\label{Abramov:hypermatrix_figure}
\end{figure}
To differentiate between the ternary multiplications 3 and 4, we will denote them as follows
\begin{equation}
(T\diamond U\diamond V)_{ijk} = T_{ijp}U_{rsp}V_{srk},\;\;
               (T\bullet U\bullet V)_{ijk} = T_{ijp}U_{rsp}V_{rsk},
\label{Abramov:two_ternary_products_circ_bullet}
\end{equation}
where $T,U,V\in {\mathfrak T}^3$. The corresponding ternary algebras of hypermatrices will be denoted as pairs $({\mathfrak T}^3,\diamond), ({\mathfrak T}^3,\bullet)$. When a statement holds for both ternary multiplications, we will denote them with a single symbol $\odot$.

In the theory of tensor representations of the rotation group \cite{Abramov:Gelfand_Minlos_Shapiro_2018} it is shown that the space of third-order hypermatrices $\mathfrak T^3$ can be uniquely decomposed into a direct sum of subspaces $\mathfrak T^3_a, a=0,1,2,3$, where $a$ is a weight of representation. In each of the subspaces $\mathfrak T^3_a$ we have a representation of the rotation group multiple to an irreducible one. Of particular interest to us are subspaces $\mathfrak T^3_0$ and $\mathfrak T^3_2$. The direct sum of these spaces is the space of traceless hypermatrices and $\mathfrak T^3_0$ is the subspace of completely skew-symmetric hypermatrices, and $\mathfrak T^3_2$ is the subspace of traceless hypermatrices with the property
\begin{equation}
T_{ijk}+T_{jki}+T_{kij}=0.
\label{Abramov:introduction_cyclic_hypermatrix}
\end{equation}
In $\mathfrak T^3_0$ there is an irreducible representation of $\mbox{SO}(3)$ of the weight zero and in $\mathfrak T^3_2$ there is a twofold irreducible representation of $\mbox{SO}(3)$ of the weight 2. In order to split the twofold irreducible representation into two irreducible representations one can decompose the subspace $\mathfrak T^3_2$ into a direct sum of two subspaces by solving the equation (\ref{Abramov:introduction_cyclic_hypermatrix}) as it was shown above (in the case of a wave function)
\begin{equation}
T_{ijk}=q\,T_{jki},\;\;(q-\mbox{cyclic hypermatrix}),\;\;\;T_{ijk}=\bar q\,T_{jki},\;\;(\bar q-\mbox{cyclic hypermatrix}).
\end{equation}
In this chapter, we calculate the quadratic $\mbox{SO}(3)$-invariants of $q$-cyclic hypermatrices and the list of these invariants can be found in \cite{Abramov:Ahmad_2011}. We prove theorem which states that if $T$ is a $q$-cyclic hypermatrix such that $I_2\neq 0$, where $I_2$ is a quadratic $\mbox{SO}(3)$-invariant, then $T$ multiplied by an appropriate factor is a right biunit of the ternary algebra $({\mathfrak T}^3,\diamond)$. The quadratic invariant $I_2$ induces a non-degenerate bilinear form $K$ on the space of $q$-cyclic hypermatrices. This non-degenerate bilinear form induces a Clifford algebra structure on the complex 5-dimensional space of $q$-cyclic hypermatrices. We show the connection between Clifford algebra structure and ternary hypermatrix multiplication $T\diamond U\diamond V$.
%%%%%%%%%%%%%%%%%%%%%%%%%
%%%%%%%%%%%%%%%%%%%%%%%%%
\section{Ternary algebra of hypermatrices}
In this section we consider the vector space of complex third-order hypermatrices. This vector space will be denoted by $\mathfrak T^3$. The dimension of this vector space is 27. The rotation group $\mbox{SO}(3)$ acts on the vector space of complex third-order hypermatrices as follows
\begin{equation}
{\tilde T}_{prs}=g_{pi}\,g_{rj}\,g_{sk}\;T_{ijk},\;\;g=(g_{ij})\in\mbox{SO}(3),\;T,\tilde T\in {\mathfrak T^3}.
\label{Abramov:transformation_of_tensor}
\end{equation}
Here and henceforth we use the Einstein's convention of summation over twice repeated indexes. It is worth to mention that the formula (\ref{Abramov:transformation_of_tensor}) defines the tensor representation of the rotation group $\mbox{SO}(3)$ in the complex vector space $\frak T^3$.

Thus we can identify the components of a third-order hypermatrix $T$ with components of the third-order covariant tensor. {We} assume that the entries of a hypermatrix $T$ are located in 3-dimensional space in such a way that connecting them with straight line segments we get a cube.
\begin{eqnarray}
\xymatrix@!0{
& & \textcolor{red}{T_{311}}   \ar@{-}[rrr]\ar@{-}'[d]'[dd][ddd]
& & & \textcolor{red}{T_{312}} \ar@{-}[rrr]\ar@{-}'[d]'[dd][ddd]
& & & \textcolor{red}{T_{313}} \ar@{-}[ddd]
\\
& T_{211}     \ar@{-}[ur] \ar@{-}[rrr] \ar@{-}'[d][ddd]
& & & T_{212} \ar@{-}[ur] \ar@{-}[rrr] \ar@{-}'[d][ddd]
& & & T_{213} \ar@{-}[ur]              \ar@{-}[ddd]
\\
\textcolor{blue}{T_{111}}       \ar@{{-}}[ur] \ar@{-}[rrr] \ar@{-}[ddd]
& & & \textcolor{blue}{T_{112}} \ar@{-}[ur] \ar@{-}[rrr] \ar@{-}[ddd]
& & & \textcolor{blue}{T_{113}} \ar@{-}[ur] \ar@{-}[ddd] \ar@{-}[ddd]
\\
& & \textcolor{red}{T_{321}}   \ar@{-}'[r]'[rr][rrr] \ar@{-}'[d]'[dd][ddd]
& & & \textcolor{red}{T_{322}} \ar@{-}'[r]'[rr][rrr] \ar@{-}'[d]'[dd][ddd]
& & & \textcolor{red}{T_{323}} \ar@{-}[ddd]
\\
& T_{221}     \ar@{-}[ur] \ar@{-}'[rr][rrr] \ar@{-}'[d][ddd]
& & & T_{222} \ar@{-}[ur] \ar@{-}'[rr][rrr] \ar@{-}'[d][ddd]
& & & T_{223} \ar@{-}[ur] \ar@{-}[ddd]
\\
\textcolor{blue}{T_{121}}       \ar@{-}[ur] \ar@{-}[rrr] \ar@{-}[ddd]
& & & \textcolor{blue}{T_{122}} \ar@{-}[ur] \ar@{-}[rrr] \ar@{-}[ddd]
& & & \textcolor{blue}{T_{123}} \ar@{-}[ur]              \ar@{-}[ddd]
\label{Abramov:cube}\\
& & \textcolor{red}{T_{331}}   \ar@{-}'[r]'[rr][rrr]
& & & \textcolor{red}{T_{332}} \ar@{-}'[r]'[rr][rrr]
& & & \textcolor{red}{T_{333}}
\\
& T_{231}     \ar@{-}[ur] \ar@{-}'[rr][rrr]
& & & T_{232} \ar@{-}[ur] \ar@{-}'[rr][rrr]
& & & T_{233} \ar@{-}[ur]
\\
\textcolor{blue}{T_{131}}       \ar@{-}[rrr]\ar@{-}[ur]
& & & \textcolor{blue}{T_{132}} \ar@{-}[rrr]\ar@{-}[ur]
& & & \textcolor{blue}{T_{133}} \ar@{-}[ur]
}
\end{eqnarray}
We can define three directions in a hypermatrix $T$. The direction of increase of the index $i$ (first index) will be referred to as the $i$-direction of a hypermatrix $T$. Similarly we define the $j$-direction and $k$-direction of a hypermatrix. It is useful to split a hypermatrix $T$ into third-order square matrices. It is usually done by sections of the cube of a hypermatrix by planes orthogonal to edges of the cube or to three directions of a hypermatrix defined above. If we cut the cube (\ref{Abramov:cube}) with three planes perpendicular to the $i$-direction of hypermatrix we get three third-order square matrices corresponding to the values $i=1,2,3$, which we denote by $T^{(1)}_1=(T_{1jk}), T^{(1)}_2=(T_{2jk}), T^{(1)}_3=(T_{3jk})$ respectively. By other words the subscript $(1)$ shows that we fix the first subscript in $T_{ijk}$ and the subscript shows the value of the fixed subscript.
\begin{figure}[htp]
\centering
  % Requires \usepackage{graphicx}
  \includegraphics[width=9cm]{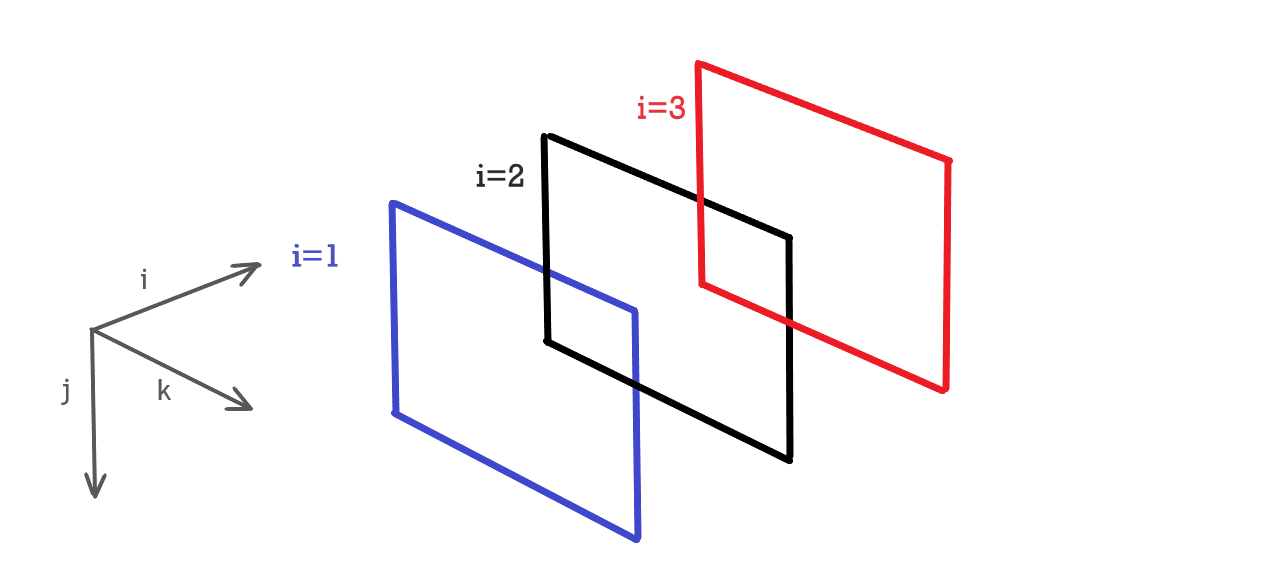}\\
  \caption{Sections of a hypermatrix perpendicular to $i$-direction}\label{Abramov:hypermatrix_figure}
\end{figure}
Arranging these three square matrices one after the other from left to right in the plane of the page and separating them by vertical lines, we obtain a useful notation for the hypermatrix $T$ in the form
\begin{equation}
T=\left(
  \begin{array}{ccc}
    T_{{1}11} & T_{{1}12} & T_{{1}13} \\
    T_{{1}21} & T_{{1}22} & T_{{1}23} \\
    T_{{1}31} & T_{{1}32} & T_{{1}33} \\
  \end{array}
\left|
\begin{array}{ccc}
    T_{{2}11} & T_{{2}12} & T_{{2}13} \\
    T_{{2}21} & T_{{2}22} & T_{{2}23} \\
    T_{{2}31} & T_{{2}32} & T_{{2}33} \\
  \end{array}
\right|
  \begin{array}{ccc}
    T_{{3}11} & T_{{3}12} & T_{{3}13} \\
    T_{{3}21} & T_{{3}22} & T_{{3}23} \\
    T_{{3}31} & T_{{3}32} & T_{{3}33} \\
  \end{array}
\right).
\label{Abramov:i-direction}
\end{equation}
Analogously fixing the value of the second subscript $j$ (the third subscript $k$) in $T_{ijk}$ we get the square matrices $T^{(2)}_1,T^{(2)}_2,T^{(2)}_3$ ($T^{(k)}_{1},T^{(k)}_{2},T^{(k)}_{3}$). For example, in order to write out the entries of the matrix $T^{(3)}_3$ using hypermatrix (\ref{Abramov:i-direction}) we must take the third column in each square matrix $T^{(1)}_1, T^{(1)}_2, T^{(1)}_3$, compose them into a square matrix and then to transpose it.

Such important operations with square matrices as determinant and trace can be extended to the space of hypermatrices by constructing corresponding analogues \cite{Abramov:Sokolov_1960}. In this chapter we will be interested in an algebraic structure based on a ternary multiplication of hypermatrices. Let us remind that a semiheap is a set $H$ equipped with a ternary multiplication $(a,b,c)\in H\times H\times H\to a\cdot b\cdot c\in H$ which for any five elements $a,b,c,d,u,v\in H$ satisfies the generalized associativity
\begin{equation}
(a\cdot b\cdot c)\cdot u\cdot v=a\cdot (u\cdot c\cdot b)\cdot v=a\cdot b\cdot (c\cdot u\cdot v).
\label{Abramov:generalized_associativity}
\end{equation}
In this chapter we will study the case when a set $H$ has the structure of a vector space. In this case, a semiheap $H$ is called a ternary algebra. Thus, ternary algebra is a vector space equipped with an associative (\ref{Abramov:generalized_associativity}) ternary multiplication law. A wide class of ternary algebras can be constructed using hypermatrices. 

In particular, the complex vector space of third-order tensors $\mathfrak T^3$, equipped with one of the ternary multiplications 1 - 4 shown in Theorem \ref{Abramov:introduction_theorem_ternary_multiplications} is a ternary algebra and this algebra is the main object of study in this chapter. Since the structure of ternary multiplications 1,2 is similar to the structures of ternary multiplications 3,4, we will study ternary algebras of hypermatrices with ternary multiplications 3,4.
%%
%%%%%%%%%%%%%%%%%%%%%
%%%%%%%%%%%%%%%%%%%%%
\section{Invariants and irreducible representations}
%%%%%%%%%%%%%%%
%%%%%%%%%%%%%%%%
%%%%%%%%%%%%%%%%%
In this chapter we will need invariants of the third-order hypermatrices under the action of the rotation group. The complete set of linear and quadratic $\mbox{SO}(3)$-invariants is given in \cite{Abramov:Ahmad_2011}. For third-order hypermatrix $T$ there is only one linear $\mbox{SO}(3)$-invariant
$$
I=\epsilon_{ijk}T_{ijk}=T_{123}+T_{231}+T_{312}-T_{321}-T_{213}-T_{132}.
$$
The complete set of quadratic $\mbox{SO}(3)$-invariants of a complex hypermatrix $T=(T_{ijk})$ consists of
\begin{eqnarray}
I_1 &=& T_{ijk}T_{ijk},\;I_1^\ast=T_{ijk}\overline T_{ijk},\;I_2=T_{ijk}T_{ikj},\;I^\ast_2=T_{ijk}\overline T_{ikj},\nonumber\\
I_3 &=& T_{ijk}\,T_{jik},\;I^\ast_3=T_{ijk}\,\overline{T}_{jik},\;I_4=T_{ijk}\,T_{kji},\;I^\ast_4=T_{ijk}\overline{T}_{kji},\nonumber\\
I_5 &=& T_{ijk}T_{kij}+T_{ijk}T_{jki},\;I_5^\ast=T_{ijk}\overline{T}_{kij}+T_{ijk}\overline{T}_{jki},\nonumber\\
I_{6} &=& T_{iik}T_{ppk},\;I^\ast_{6} = T_{iik}\overline T_{ppk},\;I_{7} = T_{iji}T_{pjp},\;I^\ast_{7} = T_{iji}\overline T_{pjp},\label{Abramov:invariants}\\
I_{8} &=& T_{ijj}T_{iqq},\;I^\ast_{8} = T_{ijj}\overline T_{iqq},\;I_{9} = T_{iik}T_{kqq},\;I^\ast_{9} = T_{iik}\overline T_{kqq},\nonumber\\
I_{10} &=& \frac{1}{2} (T_{iik}T_{pkp}+T_{iji}T_{ppj}),\;I^\ast_{10} =\frac{1}{2} (T_{iik}\overline T_{pkp}+T_{iji}\overline T_{ppj}),\nonumber\\
I_{11} &=& \frac{1}{2} (T_{iji}T_{jqq}+T_{ijj}T_{pip}),\;I^\ast_{11} = \frac{1}{2} (T_{iji}\overline T_{jqq}+T_{ijj}\overline T_{pip}).\nonumber
\end{eqnarray}

Recall that the rotation group acts on the vector space of hypermatrices $\mathfrak T^3$ and this action encodes the tensor nature of the hypermatrices. Thus, any structure on the vector space of hypermatrices must be consistent with the action of the rotation group, that is, be invariant with respect to this action. Let us denote this action as follows
\begin{equation}
(g,T)\in \mbox{SO}(3)\times {\mathfrak T}^3\to g\cdot T\in{\mathfrak T}^3.
\label{Abramov:action_of_rotation_group}
\end{equation}
Then (\ref{Abramov:transformation_of_tensor}) can be written as
$$
(g\cdot T)_{prs}=g_{pi}\,g_{rj}\,g_{sk}\;T_{ijk}.
$$
Since the ternary multiplications (\ref{Abramov:two_ternary_products_circ_bullet}) are constructed by means of contractions of tensors over a pair of indices, it is obvious that the ternary product of three tensors $T,U,V$ is an $\mbox{SO}(3)$-tensor. Hence the ternary multiplications (\ref{Abramov:two_ternary_products_circ_bullet}) commute with the action of the rotation group (or its tensor representation), that is, we have
$$
(g\cdot T)\odot (g\cdot U)\odot (g\cdot V)=g\cdot (T\odot U\odot V).
$$

Another $\mbox{SO}(3)$-invariant structure on the ternary algebra $\mathfrak T^3$ is a Hermitian metric. We use the invariant $I^\ast_1$ (\ref{Abramov:invariants}) to construct a Hermitian metric on the ternary algebra ${\mathfrak T}^{3}$. Hence we define the Hermitian scalar product of two hypermatrices $T,U\in{\mathfrak T}^3$ as follows
\begin{equation}
h(T,U)=T_{ijk}\,\overline U_{ijk}.
\label{Abramov:Hermitian_metric}
\end{equation}
This way of defining the Hermitian metric $h$ is natural, since in this case we consider the entries of a hypermatrix as the coordinates of the 27-dimensional complex  vector. Evidently
$$
h(g\cdot T,g\cdot U)=h(T,U).
$$

In the theory of representations of the rotation group \cite{Abramov:Gelfand_Minlos_Shapiro_2018}, it is shown that the representation of the rotation group in the complex vector space of hypermatrices of third order (\ref{Abramov:action_of_rotation_group}) can be uniquely decomposed into irreducible representations as follows
$$
{\mathfrak T}^3=\oplus_{w=0}^3\;{\mathfrak T}^3_w,
$$
where $w$ is a weight of representation, $\mathfrak T^3_0$ is the vector space of totally skew-symmetric third-order hypermatrices, $\mathfrak T^3_1$ is the vector space of third-order hypermatrices of the type
$$
T_{ijk}=\delta_{ij} T_k+\delta_{ik} T_j+\delta_{jk} T_i,
$$
$\mathfrak T^3_2$ is the vector space of traceless hypermatrices (trace over any pair of indices is zero) satisfying the equation
$$
T_{ijk}+T_{jki}+T_{kij}=0,
$$
and $\mathfrak T^3_3$ is the vector space of traceless totally symmetric hypermatrices.
Thus
\begin{eqnarray}
{\mathfrak T}^3_0 &=& \{T\in{\mathfrak T}^3: T_{ijk}=\lambda\,\epsilon_{ijk}, \lambda\in{\mathbb C}\},\nonumber\\
{\mathfrak T}^3_1 &=& \{T\in{\mathfrak T}^3: T_{ijk}=\delta_{ij} T_k+\delta_{ik} T_j+\delta_{jk} T_i\},\nonumber\\
{\mathfrak T}^3_2 &=& \{T\in{\mathfrak T}^3: T_{iik}=0,\,T_{iji}=0,\,T_{ijj}=0,\,T_{ijk}+T_{jki}+T_{kij}=0\},\nonumber\\
{\mathfrak T}^3_3 &=& \{T\in{\mathfrak T}^3: T_{iik}=0,\,T_{iji}=0,\,T_{ijj}=0, T_{i_{\sigma(1)}i_{\sigma(2)}i_{\sigma(3)}}=T_{i_1i_2i_3},\,\forall \sigma\in S_3\},\nonumber
\end{eqnarray}
where $\epsilon_{ijk}$ is the Levi-Civita symbol and $S_3$ is the group of substitutions of the set $\{1,2,3\}$. These subspaces have the following dimensions
$$
\mbox{dim}\,{\mathfrak T}^3_0=1,\;\;\mbox{dim}\,{\mathfrak T}^3_1=9,\;\;\mbox{dim}\,{\mathfrak T}^3_2=10,\;\;\mbox{dim}\,{\mathfrak T}^3_3=7.
$$

In each of these subspaces we have a representation of the rotation group that is a multiple of an irreducible one. In the subspace ${\mathfrak T}^3_0$ we have only one irreducible representation (of weight 0), in the subspace ${\mathfrak T}^3_1$ a three times repeated (threefold) irreducible representation (of weight 1), in the subspace ${\mathfrak T}^3_2$ a twice repeated (twofold) irreducible representation (of weight 2) and in the subspace ${\mathfrak T}^3_3$ we have one irreducible representation (of weight 3). It is worth to note that every subspace ${\mathfrak T}^3_w$ is invariant under the action of the rotation group. It is also worth noting that the direct sum of three subspaces ${\mathfrak T}^3_0, {\mathfrak T}^3_2,{\mathfrak T}^3_3$  is the subspace of traceless third-order hypermatrices (trace over any pair of subscripts is zero), which we denote by ${\mathfrak T}^3_\mathtt{tr}$. Hence ${\mathfrak T}^3={\mathfrak T}^3_1\oplus {\mathfrak T}^3_\mathtt{tr}$, where $\mbox{dim}\,{\mathfrak T}^3_\mathtt{tr}=18$.

In order to decompose a representation that is a multiple of an irreducible representation into irreducible representations, we must decompose the representation space into a direct sum of subspaces (their number is equal to the multiplicity of a representation) so that this decomposition is invariant under the action of the rotation group. In this chapter, we will study the subspace of the weight 2 representation, that is, the subspace ${\mathfrak T}^3_2$. This is because the subspace ${\mathfrak T}^3_2$ is most closely related to the ternary generalization of the Pauli principle. In this subspace there is a twofold irreducible representation of the rotation group. Hence in order to split this twofold irreducible representation into two irreducible representations we have to split the representation space ${\mathfrak T}^3_2$ into a direct sum of two subspaces invariant with respect to the action of the rotation group. For this purpose we will use a substitution operator. Let
$
\varsigma\in S_3
$
be the cyclic substitution $\varsigma(1)=2,\varsigma(2)=3,\varsigma(3)=1$. Define the substitution operator $\mathtt{L}_{\varsigma}:{\mathfrak T}^3\to {\mathfrak T}^3$ as follows $\mathtt{L}_{\varsigma}(T)_{i_1i_2i_3}=T_{i_{\varsigma(1)}i_{\varsigma(2)}i_{\varsigma(3)}}=T_{i_2i_3i_1}$. Obviously the substitution operator $\mathtt{L}_{\varsigma}$ is a linear operator and $\mathtt{L}^3_{\varsigma}=\mathtt{Id}$, where $\mathtt{Id}$ is the identity operator. The latter implies that the substitution operator ${\mathtt L}_{\varsigma}$ has three eigenvalues $1,q,\bar q$, where $q=\exp{(2i\pi/3)}$ is the primitive third order root of unity and $\bar q$ is its complex conjugate. It is well known that $1+q+\bar q=0$. Hence we can decompose the vector space of third-order hypermatrices ${\mathfrak T}^3$ into the direct sum of three subspaces ${\mathfrak T}^{3,1},{\mathfrak T}^{3,q},{\mathfrak T}^{3,\bar q}$ corresponding to the eigenvalues $1, q, \bar q$ respectively. Hence
\begin{equation}
{\mathfrak T}^3={\mathfrak T}^{3,1}\oplus{\mathfrak T}^{3,q}\oplus{\mathfrak T}^{3,\bar q},
\label{Abramov:decomposition_into_three_subspaces}
\end{equation}
where
\begin{eqnarray}
{\mathfrak T}^{3,1}&=&\{T\in{\mathfrak T}^3: \mathtt{L}_{\varsigma}(T)=T\},\nonumber\\
{\mathfrak T}^{3,q}&=&\{T\in{\mathfrak T}^3: \mathtt{L}_{\varsigma}(T)=q\,T\},\nonumber\\
{\mathfrak T}^{3,\bar q} &=& \{T\in{\mathfrak T}^3: \mathtt{L}_{\varsigma}(T)=\bar q\,T\}.\nonumber
\nonumber
\end{eqnarray}
The decomposition (\ref{Abramov:decomposition_into_three_subspaces}) can be obtained from more elementary considerations. It is easy to see that any third-order hypermatrix $T$ can be represented in the form
\begin{eqnarray}
T_{ijk} = \frac{1}{3}(T_{ijk}+T_{jki}+T_{kij})+\frac{1}{3}(T_{ijk}+{\bar q}\,T_{jki}+q\,T_{kij})+\frac{1}{3}(T_{ijk}+{q}\,T_{jki}+{\bar q}\,T_{kij}).\nonumber
\end{eqnarray}
In order to give this decomposition a more concise form we introduce the following square polynomials of the substitution operator
\begin{eqnarray}
\xi_1 &=& \frac{1}{3}(\mathtt{Id}+\mathtt{L}_{\varsigma}+\mathtt{L}^2_{\varsigma}),\nonumber\\
     \xi_q &=& \frac{1}{3} (\mathtt{Id}+\bar q\,\mathtt{L}_{\varsigma}+q\,\mathtt{L}^2_{\varsigma}),\nonumber\\
         \xi_{\bar q} &=& \frac{1}{3} (\mathtt{Id}+q\,\mathtt{L}_{\varsigma}+\bar q\,\mathtt{L}^2_{\varsigma}).\nonumber
\end{eqnarray}
Thus, the operators $\xi_1,\xi_q,\xi_{\bar q}$ are operators of projection of the hypermatrix onto subspaces ${\mathfrak T}^{3,1},{\mathfrak T}^{3,q},{\mathfrak T}^{3,\bar q}$  respectively. It is easy to verify the following properties
$$
\xi_1+\xi_q+\xi_{\bar q}=\mathtt{Id},\;{\mathtt L}_\xi\,\xi_1=\xi_1\,{\mathtt L}_\xi=\xi_1,\,
        {\mathtt L}_\xi\,\xi_q=\xi_q\,{\mathtt L}_\xi=q\,\xi_q, {\mathtt L}_\xi\,\xi_{\bar q}=\xi_{\bar q}\,{\mathtt L}_\xi\,=\bar q\,\xi_{\bar q}.
$$
Now we can describe the weight 2 representation vector space ${\mathfrak T}^3_2$ as follows
$$
{\mathfrak T}^3_2=\mbox{Ker}\,\xi_1\cap {\mathfrak T}^3_\mathtt{tr}.
$$
It is easy to verify that the subspace ${\mathfrak T}^3_2$ is invariant with respect to the substitution operator $\mathtt{L}_{\varsigma}$, that is, the restriction of the operator $\mathtt{L}_{\varsigma}$ to the subspace ${\mathfrak T}^3_2$ is a correctly defined linear operator on ${\mathfrak T}^3_2$, i.e. $\mathtt{L}_{\varsigma}:{\mathfrak T}^3_2\to {\mathfrak T}^3_2$. Assume that $T\in {\mathfrak T}^3_\mathtt{tr}$. Then $\mathtt{L}_{\varsigma}(T)\in {\mathfrak T}^3_\mathtt{tr}.$ Indeed
$$
\mathtt{L}_{\varsigma}(T)_{iij}=T_{iji}=0,
$$
and analogously for the other traces. From the property ${\mathtt L}_\xi\,\xi_1=\xi_1\,{\mathtt L}_\xi=\xi_1$ it follows that the subspace $\mbox{Ker}\,\xi_1$ is invariant under the action of the operator ${\mathtt L}_\xi$.

Thus the 10-dimensional weight 2 representation vector space splits into two 5-dimensional subspaces ${\mathfrak T}^3_2={\mathfrak T}^{3,q}_2\oplus {\mathfrak T}^{3,\bar q}_2$, where ${\mathfrak T}^{3,q}_2={\mathfrak T}^{3,q}\cap {\mathfrak T}^{3}_2$ and ${\mathfrak T}^{3,\bar q}_2={\mathfrak T}^{3,\bar q}\cap {\mathfrak T}^{3}_2$. It is worth to note that ${\mathfrak T}^{3,1}\cap {\mathfrak T}^{3}_2=\{0\}$. Thus in each 5-dimensional subspace ${\mathfrak T}^{3,q}_2,{\mathfrak T}^{3,\bar q}_2$ we have an irreducible representation of the rotation group. Since the resulting subspaces ${\mathfrak T}^{3,q}_2,{\mathfrak T}^{3,\bar q}_2$ are very important for what follows, it is useful to give them an explicit description
\begin{eqnarray}
{\mathfrak T}^{3,q}_2 &=& \{T\in{\mathfrak T}^3: T_{iij}=0,\,T_{iji}=0,\,T_{jii}=0,\,T_{ijk}={\bar q}\,T_{jki}\},\label{Abramov:bar_q_hypermatrices}\\
{\mathfrak T}^{3,\bar q}_2 &=& \{T\in{\mathfrak T}^3: T_{iij}=0,\,T_{iji}=0,\,T_{jii}=0,\,T_{ijk}={q}\,T_{jki}\}.\label{Abramov:q_hypermatrices}
\end{eqnarray}
The notations ${\mathfrak T}^{3,q}_2$ and ${\mathfrak T}^{3,\bar q}_2$ are quite complex. Unfortunately, we have not found simpler notations that would include all the information about these subspaces. Formulas (\ref{Abramov:bar_q_hypermatrices}) and (\ref{Abramov:q_hypermatrices}) show that in fact the hypermatrices of these subspaces are determined by only two conditions, where one is that they are traceless, and the second is a law of transformation of hypermatrix entries under a cyclic permutation of subscripts. Therefore, to simplify the presentation, we will call hypermatrices of (\ref{Abramov:bar_q_hypermatrices}) traceless $\bar q$-cyclic hypermatrices, meaning by the latter that $T_{ijk}={\bar q}\,T_{jki}$. Note that the same formula can be represented in the form $T_{jki}={q}\,T_{ijk}$ that clearly shows that such hypermatrices are eigenvectors of the substitution operator ${\mathtt L}_\varsigma$ with the eigenvalue $q$. This is the reason why we use $q$ in the notation ${\mathfrak T}^{3,q}_2$ for the subspace of this kind of hypermatrices. Analogously the hypermatrices of (\ref{Abramov:q_hypermatrices}) will be referred to as traceless $q$-cyclic hypermatrices, where $q$-cyclic stands for $T_{ijk}={q}\,T_{jki}$.

It is easy to see that complex conjugation maps subspace ${\mathfrak T}^{3,q}_2$ to subspace ${\mathfrak T}^{3,\bar q}_2$ and vice versa. Indeed, if a traceless hypermatrix $T=(T_{ijk})$ transforms under cyclic permutation $\varsigma$ according to the formula $T_{ijk}=\bar q\,T_{jki}$, i.e. $T\in {\mathfrak T}^{3,q}_2$, then its complex conjugate $\overline T=(\overline T_{ijk})$ will transform under the same cyclic permutation according to the formula $\overline T_{ijk}=\overline{\bar q\,T_{jki}}=q\,\overline T_{jki}$, i.e. $\overline T\in {\mathfrak T}^{3,\bar q}_2$.
%%%%%%%%%%%%%%%%%
%%%%%%%%%%%%%%%%
%%%%%%%%%%%%%%%%
\section{The space of traceless $q$-cyclic hypermatrices}
%%%%%%%%%%%%%%%%%%%%%%%%%
%%%%%%%%%%%%%%%%%%%%%%%%%
Now our aim is to study the structure of the 5-dimensional complex vector space of traceless $q$-cyclic hypermatrices ${\mathfrak T}^{3,\bar q}_2.$ In this space we have an irreducible representation of the rotation group $\mbox{SO}(3)$. We choose the following five traceless $q$-cyclic hypermatrices as a basis
\begin{eqnarray}
E_1 &=& \frac{1}{\sqrt{6}}\left(
  \begin{array}{ccc}
    0\! &\! 0 \!&\!\! 0 \!\\[0.2cm]
    0\! &\! 1 \!&\!\! 0 \!\\[0.2cm]
    0\! &\! 0 \!&\!\! -1 \!\\
  \end{array}
\left|
\begin{array}{ccc}
 0\! &\! q \!&\! 0\! \\[0.2cm]
 {\bar q} \!&\! 0 \!&\! 0\!\\[0.2cm]
    0 \!&\! 0 \!&\! 0 \!\\
  \end{array}
\right|
  \begin{array}{ccc}
  \!0 \!&\! 0 \!&\!\! -q \!\\[0.2cm]
  \!0 \!&\! 0 \!&\!\! 0 \!\\[0.2cm]
 \!-{\bar q} \!&\!\! 0 \!&\! 0 \!\\
  \end{array}
\right),\nonumber
\end{eqnarray}
\begin{eqnarray}
E_2 &=& \frac{1}{\sqrt{6}}\left(
  \begin{array}{ccc}
   \! 0 \!&\!\! -{\bar q} \!&\! 0 \!\\[0.2cm]
   \! -q \!&\!\! 0 \!&\! 0\\[0.2cm]
   \! 0 \!&\!\! 0 \!&\! 0 \!\\
  \end{array}
\left|
\begin{array}{ccc}
 \!-1 \!&\! 0 \!&\! 0 \!\\[0.2cm]
\! 0 \!&\! 0 \!&\! 0 \!\\[0.2cm]
  \!  0 \!&\! 0 \!&\! 1 \!\\
  \end{array}
\right|
  \begin{array}{ccc}
  0 \!&\! 0 \!&\! 0 \!\\[0.2cm]
  0 \!&\! 0 \!&\! q\\[0.2cm]
 0 \!&\! {\bar q} \!&\! 0 \!\\
  \end{array}
\right),\nonumber\\
E_3 &=& \frac{1}{\sqrt{6}}\left(
  \begin{array}{ccc}
    0\! &\! 0 \!&\!\! {\bar q} \!\\[0.2cm]
    0\! &\! 0 \!&\!\! 0 \!\\[0.2cm]
    q\! &\! 0 \!&\!\! 0 \!\\
  \end{array}
\left|
\begin{array}{ccc}
 0\! &\! 0 \!&\! 0\! \\[0.2cm]
 0 \!&\! 0 \!&\! -{\bar q}\!\\[0.2cm]
    0 \!&\! -q \!&\! 0 \!\\
  \end{array}
\right|
  \begin{array}{ccc}
  \!1 \!&\! 0 \!&\!\! 0 \!\\[0.2cm]
  \!0 \!&\! -1 \!&\!\! 0 \!\\[0.2cm]
 \! 0 \!&\!\! 0 \!&\! 0 \!\\
  \end{array}
\right),\label{Abramov:basis}\\
E_4 &=& \frac{1}{\sqrt{3}}\left(
  \begin{array}{ccc}
   \! 0 \!&\!\! 0 \!&\! 0 \!\\[0.2cm]
   \! 0 \!&\!\! 0 \!&\! 1\\[0.2cm]
   \! 0 \!&\!\! 0 \!&\! 0 \!\\
  \end{array}
\left|
\begin{array}{ccc}
 \!0 \!&\! 0 \!&\! 0 \!\\[0.2cm]
\! 0 \!&\! 0 \!&\! 0 \!\\[0.2cm]
  \!  {\bar q} \!&\! 0 \!&\! 0 \!\\
  \end{array}
\right|
  \begin{array}{ccc}
  0 \!&\! q \!&\! 0 \!\\[0.2cm]
  0 \!&\! 0 \!&\! 0\\[0.2cm]
 0 \!&\! 0 \!&\! 0 \!\\
  \end{array}
\right),\nonumber\\
E_5 &=&\frac{1}{\sqrt{3}}
\left(
  \begin{array}{ccc}
   \! 0 \!&\!\! 0 \!&\! 0 \!\\[0.2cm]
   \! 0 \!&\!\! 0 \!&\! 0\\[0.2cm]
   \! 0 \!&\!\! q \!&\! 0 \!\\
  \end{array}
\left|
\begin{array}{ccc}
 \!0 \!&\! 0 \!&\! {\bar q} \!\\[0.2cm]
\! 0 \!&\! 0 \!&\! 0 \!\\[0.2cm]
  \!  0 \!&\! 0 \!&\! 0 \!\\
  \end{array}
\right|
  \begin{array}{ccc}
  0 \!&\! 0 \!&\! 0 \!\\[0.2cm]
  1 \!&\! 0 \!&\! 0\\[0.2cm]
 0 \!&\! 0 \!&\! 0 \!\\
  \end{array}
\right).\nonumber
\end{eqnarray}
It is easy to verify that this basis is an orthonormal basis with respect to the Hermitian metric $h$ (\ref{Abramov:Hermitian_metric}), that is, $h(E_A,E_B)=\delta_{AB},$ where the subscripts denoted by capital Latin letters run from 1 to 5. Using the orthonormal basis $\{E_A\}$, we can identify a vector $z=(z^A)$ of the 5-dimensional complex vector space $\mathbb C^5$ with the third-order hypermatrix $T(z)$ as follows
$$
z=(z^A)\in {\mathbb C}^5\mapsto T(z)=z^A\,E_A\in {\mathfrak T}^{3,{\bar q}}_2,
$$
where
\begin{equation}
T(z)=\left(
  \begin{array}{ccc}
    0 & -\frac{{\bar q}\,z^2}{\sqrt{6}} & \frac{{\bar q}\,z^3}{\sqrt{6}} \\[0.2cm]
    -\frac{q\,z^2}{\sqrt{6}} & \frac{z^1}{\sqrt{6}} & \frac{\,z^4}{\sqrt{3}}\\[0.2cm]
    \frac{q\,z^3}{\sqrt{6}} & \frac{q\,z^5}{\sqrt{3}} & -\frac{z^1}{\sqrt{6}} \\
  \end{array}
\left|
\begin{array}{ccc}
 -\frac{z^2}{\sqrt{6}} & \frac{q\,z^1}{\sqrt{6}} & \frac{{\bar q}\,z^5}{\sqrt{3}} \\[0.2cm]
    \frac{{\bar q}\,z^1}{\sqrt{6}} & 0 & -\frac{{\bar q}\,z^3}{\sqrt{6}}\\[0.2cm]
    \frac{{\bar q}\,z^4}{\sqrt{3}} & -\frac{q\,z^3}{\sqrt{6}} & \frac{z^2}{\sqrt{6}} \\
  \end{array}
\right|
  \begin{array}{ccc}
  \frac{z^3}{\sqrt{6}} & \frac{q\,z^4}{\sqrt{3}} & -\frac{q\,z^1}{\sqrt{6}} \\[0.2cm]
    \frac{z^5}{\sqrt{3}} & -\frac{z^3}{\sqrt{6}} & \frac{q\,z^2}{\sqrt{6}}\\[0.2cm]
    -\frac{{\bar q}\,z^1}{\sqrt{6}} & \frac{{\bar q}\,z^2}{\sqrt{6}} & 0 \\
  \end{array}
\right).
%\label{Abramov:T(z)}
\nonumber
\end{equation}
Throughout what follows we will keep in mind that the 5-dimensional complex vector space $\mathbb C^5$ is identified with the subspace of third-order hypermatrices ${\mathfrak T}^{3,{\bar q}}_2$. In other words, we consider a 5-dimensional complex (Hermitian) space, each point of which is a third-order hypermatrix (three-dimensional matrix), that is, it has the shape of a cube (\ref{Abramov:cube}).

Since the subspace ${\mathfrak T}^{3,{\bar q}}_2$ is invariant under the action of the rotation group $\mbox{SO}(3)$, the set of invariants (\ref{Abramov:invariants}), when restricted to the subspace ${\mathfrak T}^{3,{\bar q}}_2$, will give the set of subspace ${\mathfrak T}^{3,{\bar q}}_2$ invariants. Calculating these invariants in coordinates $z^A$, we get
\begin{eqnarray}
&&\!\!\!\!\!\! I_1=0,\qquad\qquad\qquad\qquad\quad I^{\ast}_1=\sum_{A=1}^5\;z^A\,{\bar z}^A=h(z,\bar z),\\
&&\!\!\!\!\!\! I_2=\sum_{A=1}^3\,(z^A)^2+2\,q\,z^4z^5,\;\;\;
        I^{\ast}_2=0,\label{form:I_2}\\
&&\!\!\!\!\!\! I_3=q\;I_2,
  \;\;\;\;\;\;\;\qquad\qquad\qquad I^\ast_3={\bar q}\,I^{\ast}_2=0,\\
&&\!\!\!\!\!\! I_4={\bar q}\;I_2,\;\;\;\;\;\;\;\;\;\;\qquad\qquad\quad I^\ast_4=q\;I^\ast_2=0,\\
&&\!\!\!\!\!\! I_5=0,\;\;\;\;\;\;\;\;\;\;\;\;\;\;\;\;\;\qquad\qquad
       I_5^\ast=-I^\ast_1,
\end{eqnarray}
and the invariants $I_6,I^\ast_6,I_7,I^\ast_7,I_8,I^\ast_8,I_9,I^\ast_9,I_{10},I^\ast_{10},I_{11},I^\ast_{11}$ vanish because they are constructed by means of the trace of a hypermatrix (hypermatrices in ${\mathfrak T}^{3,{\bar q}}_2$ are traceless). Thus we have two independent invariants in the subspace ${\mathfrak T}^{3,{\bar q}}_2$, where on is the canonical Hermitian metric $h(z,\bar z)$ and the other is the quadratic form $(z^1)^2+(z^2)^2+(z^3)^2+2q\,z^4z^5$, which will be denoted by $K(z,z).$

The quadratic invariant $K(z,z)$ will play an important role in what follows. In paper \cite{Abramov:Abramov_Liivapuu_2024}  the properties of this invariant were studied. Let $K(z,z)=K_{AB}z^Az^B$, where $K_{AB}$ is the 5th order matrix of the quadratic form $K(z,z)$
$$
(K_{AB})=\left(
                     \begin{array}{ccccc}
                1 & 0 & 0 & 0 & 0 \\
                0 & 1 & 0 & 0 & 0 \\
                0 & 0 & 1 & 0 & 0 \\
                0 & 0 & 0 & 0 & q \\
                0 & 0 & 0 & q & 0 \\
                     \end{array}\right).
$$
The matrix $(K_{AB})$ can also be considered as a twice covariant tensor in the 5-dimensional complex vector space $\mathbb C^5$, that is,
\begin{equation}
{\tilde K}_{AB}={\mathtt U}^C_A\,{\mathtt U}^D_B\;K_{CD},
\label{form:transformation rule}
\end{equation}
where $({\mathtt U}^C_A)$ is a 5th order unitary matrix. For any orthonormal basis $\{E_A\}$ the second-order covariant tensor
$K_{AB}=K(E_A,E_B)$ determined by the quadratic form  $K(z,z)$ has the following properties which are unitary invariant:
\begin{itemize}
\item $K_{AB}=K_{BA}$ (symmetric),
\item $K_{AB}\,\overline{K}_{CB}=\delta_{AB}$ (unitary),
\item $\mbox{det}\,(K_{AB})=\epsilon$, where $\epsilon=e^{i\pi/3}$ is the sixth order root of unity.
\end{itemize}
It also has the following properties, which are invariant with respect to real unitary transformations:
\begin{itemize}
\item $K^6=E$, where the tensor $K=(K_{AB})$ is considered as a matrix,
\item the eigenvalues of the tensor $K=(K_{AB})$ are $1,1,1,q,-q$.
\end{itemize}
\begin{definition}
A third-order hypermatrix $T=(T_{ijk})$ is said to be $I_2$-regular if the second $\mbox{SO}(3)$-invariant $I_2(T)=T_{ijk}T_{ikj}$ is non-zero.
\end{definition}
Obviously if $T$ is a third-order traceless $q$-cyclic hypermatrix and $z^A$ are the coordinates of this hypermatrix in the orthonormal basis (\ref{Abramov:basis}) then $T$ is $I_2$-regular if $K(z,z)\neq 0$.
%%%%%%%%%%%%%%%%
%%%%%%%%%%%%%%%%%%%%%
%%%%%%%%%%%%%%%%%%%%
\section{Right biunits of ternary algebra of hypermatrices}
%%%%%%%%%%%%%%%%%
%%%%%%%%%%%%%%%%%%%%
Now our aim is to show that each $I_2$-regular traceless $q$-cyclic hypermatrix $U$ can be used to construct a right biunit for the ternary product $T\diamond V\diamond W$. To this end, we will derive a more general formula for the ternary product $T\diamond T(u)\diamond T(v)$, where $T$ is an arbitrary third-order hypermatrix $T\in {\mathfrak T}^3$ and $T(u), T(v)$ are traceless $q$-cyclic hypermatrices with coordinates $(u^A),(v^B)$ respectively. It is convenient to split the calculation of the ternary product into two stages. Let us remind that
$$
(T\diamond U\diamond V)_{ijk}=T_{ijp}\,U_{rsp}\,V_{srk}.
$$
It is easy to see that $U,V$ part of this product can be considered as the trace of the product of two square matrices $U^{(3)}_p=(U_{msp}), V^{(3)}_k=(V_{snk})$. Thus it is useful to introduce an auxiliary third-order square matrix $H$ as follows
$$
H=(H_{pk}),\;\;\;H_{pk}=\mbox{Tr}\,(U^{(3)}_p\,V^{(3)}_k).
$$
Now the calculation of the entries of the ternary product $T\diamond U\diamond V$ can be performed by sequentially taking the rows of three square matrices in (\ref{Abramov:i-direction}) and multiplying them on the right by the auxiliary matrix $H$. The three entries obtained in this way will be the row of the ternary product, that is,
\begin{equation}
(T\diamond U\diamond V)_{ijk}=T_{ijp}\,H_{pk}.
\end{equation}
First of all, we write down the square matrices $U^{(3)}_p,V^{(3)}_k$. As we have already mentioned, for this we must take three columns of square matrices in (\ref{Abramov:i-direction}) (the first, second and third columns, respectively), form a matrix from them and transpose it. We obtain
\begin{eqnarray}
U^{(3)}_1 \!\!\!\!&=&\!\!\!\!\frac{1}{\sqrt{6}}\left(
            \begin{array}{ccc}
              0 & -q\,u^2 & q\,u^3 \\
              -u^2 & \bar q\,u^1 & \sqrt{2}\,\bar q\,u^4 \\
              u^3 & \sqrt{2}\,u^5 & -\bar q\,u^1 \\
            \end{array}
          \right),\;\;\;\;                 V^{(3)}_1 \!\!=\!\!\frac{1}{\sqrt{6}}\left(
                                                                        \begin{array}{ccc}
                                                                        0 & -q\,v^2 & q\,v^3 \\
                                                                       -v^2 & \bar q\,v^1 & \sqrt{2}\,\bar q\,v^4 \\
                                                                       v^3 & \sqrt{2}\,v^5 & -\bar q\,v^1 \\
                                                                         \end{array}
                                                                                \right),\nonumber\\
U^{(3)}_2 \!\!\!\!&=&\!\!\!\!\frac{1}{\sqrt{6}}\left(
            \begin{array}{ccc}
              -\bar q\,u^2 & u^1 & \sqrt{2}\,q\,u^5 \\
              q\,u^1 & 0 & -q\,u^3 \\
              \sqrt{2}\,q\,u^4 & -u^3 & \bar q\,u^2 \\
            \end{array}
          \right), \;\;               V^{(3)}_2 \!\!=\!\!\frac{1}{\sqrt{6}}\left(
                                                                      \begin{array}{ccc}
                                                                      -\bar q\,v^2 & v^1 & \sqrt{2}\,q\,v^5 \\
                                                                      q\,v^1 & 0 & -q\,v^3 \\
                                                                      \sqrt{2}\,q\,v^4 & -v^3 & \bar q\,v^2 \\
                                                                       \end{array}
                                                                              \right),\nonumber\\
U^{(3)}_3 \!\!\!\!&=&\!\!\!\!\frac{1}{\sqrt{6}}\left(
            \begin{array}{ccc}
              \bar q\,u^3 & \sqrt{2}\,u^4 & -u^1 \\
              \sqrt{2}\,\bar q\,u^5 & -\bar q\,u^3 & u^2 \\
              -q\,u^1 & q\,u^2 & 0 \\
            \end{array}
          \right), \;\;\;\;                V^{(3)}_3 \!\!=\!\!\frac{1}{\sqrt{6}}\left(
                                                                       \begin{array}{ccc}
                                                                      \bar q\,v^3 & \sqrt{2}\,v^4 & -v^1 \\
                                                                      \sqrt{2}\,\bar q\,v^5 & -\bar q\,v^3 & v^2 \\
                                                                      -q\,v^1 & q\,v^2 & 0 \\
                                                                        \end{array}
                                                                                \right).\nonumber
\end{eqnarray}
Then taking the trace of the products of these matrices we find the entries of the auxiliary matrix $H$. The diagonal entries of $H$ are all equal and multiple of the invariant bilinear form $K(u,v)$, i.e.
\begin{eqnarray}
H_{pp} = \frac{q}{3}(u^1v^1+u^2v^2+u^3v^3+q\,u^4v^5+q\,u^5v^4)=\frac{q}{3}\,K(u,v),\;\;p=1,2,3.\label{Abramov:symmetric part}
\end{eqnarray}
Non-diagonal entries $H_{pk},p\neq k$ can be written as follows
\begin{eqnarray}
H_{12} &=& \frac{q}{6}\left|
                         \begin{array}{cc}
                           u^2 & u^1 \\
                           v^2 & v^1 \\
                         \end{array}
                       \right|+\frac{\bar q}{3\sqrt{2}}\left|
                         \begin{array}{cc}
                           u^3 & u^4 \\
                           v^3 & v^4 \\
                         \end{array}
                       \right|+\frac{q}{3\sqrt{2}}\left|
                         \begin{array}{cc}
                           u^3 & u^5 \\
                           v^3 & v^5 \\
                         \end{array}
                       \right|,\nonumber\\
H_{23} &=& \frac{q}{6}\left|
                         \begin{array}{cc}
                           u^3 & u^2 \\
                           v^3 & v^2 \\
                         \end{array}
                       \right|+\frac{q}{3\sqrt{2}}\left|
                         \begin{array}{cc}
                           u^1 & u^4 \\
                           v^1 & v^4 \\
                         \end{array}
                       \right|+\frac{\bar q}{3\sqrt{2}}\left|
                         \begin{array}{cc}
                           u^1 & u^5 \\
                           v^1 & v^5 \\
                         \end{array}
                       \right|,\nonumber\\
H_{31} &=& \frac{q}{6}\left|
                         \begin{array}{cc}
                           u^1 & u^3 \\
                           v^1 & v^3 \\
                         \end{array}
                       \right|+\frac{1}{3\sqrt{2}}\left|
                         \begin{array}{cc}
                           u^2 & u^4 \\
                           v^2 & v^4 \\
                         \end{array}
                       \right|+\frac{1}{3\sqrt{2}}\left|
                         \begin{array}{cc}
                           u^2 & u^5 \\
                           v^2 & v^5 \\
                         \end{array}
                       \right|,\nonumber
\end{eqnarray}
and the entries $H_{21}, H_{32}, H_{13}$ are obtained by simultaneously rearranging the columns in each of the second order determinants in $H_{12}, H_{23}, H_{31}$  respectively. Hence $H_{pk}=-H_{kp}$. The non-diagonal elements of the matrix $H_{pk}$ can be written in compact form if we introduce the following third-order hypermatrix $\tau$, which can be considered as a $q$-analog of the Levi-Civita symbol. We define $\tau$ as follows
\begin{eqnarray}
\tau_{ijk}=\begin{cases}
                0,\; \mbox{if there are at least two equal subscripts among}\;i,j,k\\
                \tau_{ijk}=q\,\tau_{jki}\;\mbox{for any even permutation of integers}\;1,2,3\\
                \tau_{ijk}=\bar q\,\tau_{jki}\;\mbox{for any odd permutation of integers}\;1,2,3\\
                \tau_{312}=1,\;\tau_{132}=-1.
           \end{cases}
\end{eqnarray}
From this definition we can easily find all the entries of the hypermatrix $\tau$
\begin{eqnarray}
\tau_{123} \!\!\!&=&\!\!\! \bar q,\;\tau_{231}=q,\;\tau_{312}=1,\nonumber\\
\tau_{213} \!\!\!&=&\!\!\! -\bar q,\;\tau_{321}=-q,\;\tau_{132}=-1.\nonumber
\end{eqnarray}
Thus, the structure of hypermatrix $\tau$ with respect to permutations of subscripts is in some way a mixture of a $q$-cyclic structure and a $\bar q$-cyclic structure, that is, by cyclically permuting an even permutation of integers 1,2,3, the entries of the hypermatrix $\tau$ are transformed according to the $q$-cyclic law, and in the case of an odd permutation we have a $\bar q$-cyclic law. In addition, the hypermatrix $\tau$ is skew-symmetric in the first two subscripts, that is, $\tau_{ijk}=-\tau_{jik}$. Moreover, if $i,j,k$ is an even permutation of $1,2,3$ then $\tau_{kji}=-{\bar q}\tau_{ijk}, \tau_{ikj}=-q\,\tau_{ijk}$.
Now making use of the hypermatrix $\tau$ we can write the non-diagonal entries of the matrix $H$ as follows
\begin{equation}
H_{pk}=\frac{q}{6}\,\left|
                         \begin{array}{cc}
                           u^k & u^p \\
                           v^k & v^p \\
                         \end{array}
                       \right|+\frac{1}{3\sqrt{2}}\tau_{pkr}\left|
                         \begin{array}{cc}
                           u^r & u^4 \\
                           v^r & v^4 \\
                         \end{array}
                       \right|+\frac{1}{3\sqrt{2}}\bar\tau_{pkr}\left|
                         \begin{array}{cc}
                           u^r & u^5 \\
                           v^r & v^5 \\
                         \end{array}
                       \right|
\end{equation}
Thus, these calculations show that the diagonal elements of matrix $H$ form its symmetric part, which we will denote by $H_{\mathtt{symm}}$, and the non-diagonal elements form its skew-symmetric part, which we will denote by $H_{\mathtt{skew}}$. Then $H=H_{\mathtt{symm}}+H_{\mathtt{skew}}$. It is easy to see that the symmetric part of matrix $H$ is determined by the bilinear form $K(u,v)$, while the skew-symmetric part is determined by the bivector constructed by means of two vectors $u,v$ of the 5-dimensional complex space. It should be noted that the bilinear form $K(u,v)$ is non-degenerate and therefore it determines the structure of the Clifford algebra on the 5-dimensional space of traceless $q$-cyclic hypermatrices.
\begin{theorem}
Let $U$ be a third-order traceless $q$-cyclic $I_2$-regular hypermatrix. Then the hypermatrix
$$
\hat U=\sqrt{\frac{3}{q I_2(U)}}\;U,
$$
is a right biunit of the ternary algebra $({\mathfrak T}^3,\diamond)$, that is, for any third-order hypermatrix $T$ we have $T\diamond\hat U\diamond\hat U=T$.
\end{theorem}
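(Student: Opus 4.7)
The plan is to reduce the biunit equation $T\diamond\hat U\diamond\hat U=T$ to the identity $\hat H=\mathtt{Id}_{3\times3}$ for the auxiliary matrix $H$ built in the previous section, and then verify the identity from the explicit formulas for the diagonal and non-diagonal entries of $H$, using crucially that the two arguments coincide.

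First I would invoke the factorization $(T\diamond U\diamond V)_{ijk}=T_{ijp}H_{pk}$ with $H_{pk}=\mathrm{Tr}(U^{(3)}_p V^{(3)}_k)$. Since this holds for every third-order hypermatrix $T\in{\mathfrak T}^3$, the equation $T\diamond\hat U\diamond\hat U=T$ is equivalent to $\hat H_{pk}=\delta_{pk}$, where $\hat H$ is the auxiliary matrix computed from $\hat U$ on both slots. Thus the problem collapses to computing $H$ in the diagonal case $u=v$.

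Next I would apply the two structural formulas for $H$ derived just before the theorem. For the non-diagonal part, each entry $H_{pk}$ ($p\neq k$) is a linear combination of $2\times 2$ determinants whose two rows are the coordinate vectors $u$ and $v$; setting $v=u$ makes every such determinant vanish, so $H_{\mathtt{skew}}=0$. For the diagonal part the formula gives $H_{pp}=\tfrac{q}{3}K(u,v)$, which at $v=u$ becomes $\tfrac{q}{3}K(u,u)$. Since $T(u)\in{\mathfrak T}^{3,\bar q}_2$, the identification of the quadratic invariant $I_2$ with $K$ in coordinates yields $K(u,u)=I_2(T(u))$, so $H_{pp}=\tfrac{q}{3}I_2(U)$ when $u$ represents $U$.

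Finally I would substitute $\hat U=\sqrt{3/(qI_2(U))}\,U$. Since $I_2$ is a quadratic form in the entries of a hypermatrix, $I_2(\hat U)=\tfrac{3}{qI_2(U)}\,I_2(U)=3/q$, hence $\hat H_{pp}=\tfrac{q}{3}\cdot\tfrac{3}{q}=1$ while $\hat H_{pk}=0$ for $p\neq k$. This gives $\hat H=\mathtt{Id}_{3\times 3}$ and therefore $(T\diamond\hat U\diamond\hat U)_{ijk}=T_{ijp}\delta_{pk}=T_{ijk}$, proving the claim. The whole argument is bookkeeping once the auxiliary matrix $H$ is in hand; the only point requiring a moment of care is checking the scalar: that $I_2$ indeed rescales as a genuine quadratic form and that the scaling factor $\sqrt{3/(qI_2(U))}$ (which requires $I_2(U)\neq 0$, i.e.\ $I_2$-regularity) is chosen precisely so that the constant $\tfrac{q}{3}I_2(\hat U)$ equals $1$.
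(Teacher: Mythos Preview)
Your proposal is correct and follows essentially the same approach as the paper's own proof: both reduce the problem to the auxiliary matrix $H$, observe that the skew-symmetric (off-diagonal) part vanishes when $u=v$ because each $2\times 2$ determinant has equal rows, identify the diagonal with $\tfrac{q}{3}K(u,u)=\tfrac{q}{3}I_2(U)$, and then rescale by $\sqrt{3/(qI_2(U))}$ to normalize. The only cosmetic difference is that the paper first derives $T\diamond U\diamond U=\tfrac{qI_2(U)}{3}T$ and then substitutes $\hat U$, whereas you compute $I_2(\hat U)=3/q$ directly; the content is the same.
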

\begin{proof}
Let $T$ be a third-order hypermatrix, $U,V$ be two third-order traceless $q$-cyclic hypermatrices whose coordinates in the basis (\ref{Abramov:basis}) are $u^A,v^B$ respectively. Then
$$
(T\diamond U\diamond V)_{ijk}=T_{ijp}\,H_{pk},
$$
where
$$
H_{pk}=\frac{q}{6}\,\left|
                         \begin{array}{cc}
                           u^k & u^p \\
                           v^k & v^p \\
                         \end{array}
                       \right|+\frac{1}{3\sqrt{2}}\tau_{pkr}\left|
                         \begin{array}{cc}
                           u^r & u^4 \\
                           v^r & v^4 \\
                         \end{array}
                       \right|+\frac{1}{3\sqrt{2}}\bar\tau_{pkr}\left|
                         \begin{array}{cc}
                           u^r & u^5 \\
                           v^r & v^5 \\
                         \end{array}
                       \right|
$$
Now if we assume $U=V$, that is $u^A=v^A$, then the skew-symmetric part of the auxiliary matrix $H$ vanishes and we are left with the symmetric part
$$
H_{pk}=\frac{q}{3}\,K(u,v)\,\delta_{pk}.
$$
Hence
$$
(T\diamond U\diamond U)_{ijk}=T_{ijp}\,\frac{q}{3}\,K(u,u)\,\delta_{pk}=\frac{q}{3}\,K(u,u)\,T_{ijk},
$$
or
$$
T\diamond U\diamond U=\frac{qI_2(U)}{3}\,T,
$$
where we used $K(u,u)=I_2(U)$ and $I_2(U)$ is the value of the second invariant $I_2$ calculated in the case of a hypermatrix $U$. Since $U$ is $I_2$-regular hypermatrix, that is, $I_2(U)\neq 0$, we can consider the hypermatrix
$$
\hat U=\sqrt{\frac{3}{qI_2(U)}}\;U,
$$
which clearly satisfies
$$
T\diamond \hat U\diamond \hat U=T\diamond(\sqrt{\frac{3}{qI_2(U)}}\;U)\diamond(\sqrt{\frac{3}{qI_2(U)}}\;U)=
                     \frac{3}{qI_2(U)}\;T\diamond U\diamond U=T
$$

\end{proof}
%%%%%%%%%%%%%%%%%%%%%%%%
%%%%%%%%%%%%%%%%%%%%%%%%
%%%%%%%%%%%%%%%%%%%%%%%%
%%%%%%%%%%%%%%%%%%%%%%%%

\end{document}